\newcommand{\be}{\begin{equation}}
\newcommand{\ee}{\end{equation}}
\newcommand{\bea}{\begin{eqnarray}}
\newcommand{\eea}{\end{eqnarray}}
\newcommand{\beas}{\begin{eqnarray*}}
\newcommand{\eeas}{\end{eqnarray*}}
\newcommand{\bfG}{{\bf \Gamma}}
\newcommand{\bbE}{\mathbb E}
\newcommand{\bbN}{\mathbb N}
\newcommand{\bbR}{\mathbb R}
\newcommand{\scE}{\mathcal E}
\newcommand{\scF}{\mathcal F}
\newcommand{\scG}{\mathcal G}
\newcommand{\scP}{\mathcal P}
\newcommand{\norm}[1]{\ensuremath{\left\| #1 \right\|}}
\newcommand{\abs}[1]{\ensuremath{\left| #1 \right|}}
\newcommand{\indicator}[1]{\ensuremath{\mathbf{1}_{\crl{#1}}}}
\newcommand{\Ito}{It\ensuremath{\hat{\textrm{o}}}}
\newcommand{\half}{\frac{1}{2}}
\newcommand{\crl}[1]{\ensuremath{ \left\{ #1 \right\} }}
\newcommand{\brak}[1]{\ensuremath{\left( #1 \right)}}
\newtheorem{theorem}{Theorem}[section]
\newtheorem{definition}[theorem]{Definition}
\newtheorem{proposition}[theorem]{Proposition}
\newtheorem{corollary}[theorem]{Corollary}
\newtheorem{lemma}[theorem]{Lemma}
\newtheorem{remark}[theorem]{Remark}
\newtheorem{example}[theorem]{Example}
\newtheorem{examples}[theorem]{Examples}
\newtheorem{foo}[theorem]{Remarks}
\newenvironment{Example}{\begin{example}\rm}{\end{example}}
\newenvironment{Remark}{\begin{remark}\rm}{\end{remark}}
\title{Variations of dynamic random networks:\\localization approach\footnote{I would like to thank Harry Crane for introducing me a new field of research and fruitful discussions.}}
\author{
Kihun Nam\\
Monash University\\
Clayton, VIC 3800, Australia
}
\begin{document}
\maketitle
\begin{abstract}
We study the variation of exchangeable graph-valued process ${\bf \Gamma}$ and its graph limit. We used a constructive method using localization technique. Our method provides a specific estimation of variation for exchangeable graph-valued process ${\bf \Gamma}$ and its graph limit for different types of metric. As a result, we extend the bounded variation of graph limit as in Crane (2016, \cite{Crane:2015we}).
\\[2mm]
{\bf MSC 2010:} 60G09, 60G07\\[2mm]
{\bf Key words:} Exchangeable, random network, graph limit\\[2mm]
\end{abstract}
\setcounter{equation}{0}
\section{Introduction}\label{intro}
\label{sec:intro}

Time-varying networks are everywhere: neuronal networks in human brain, social networks in Facebook or LinkedIn, systemic risk models in finance, virus spreading models in epidmiology, terrorist network model for national security, and many others. In his recent paper, Crane \cite{Crane:2016vw,Crane:2015ef,Crane:2015we}) studied fundamental properties of exchangeable c\`adl\`ag graph-valued processes such as Levy characterization for Markovian case and variation estimates for graph limits for general case. In particular, Crane (\cite{Crane:2015we}) showed that the graph limit of $\bfG$ necessarily have (finite-dimensional) bounded variation under certain
metric on the space of graph limit. This implies that the graph-limit process of exchangeable c\`adl\`ag dynamic random networks cannot have local martingale term in their Doob-Meyer decomposition, and in turns, it means that we do not need to consider second-order term when we apply {\Ito} formula for the graph-limit process. 

However, there are three limitations in his result. First of all, the result in \cite{Crane:2015we} depends on the metric one imposes on the space of graph limit. Since there is no natural metric on the space of graph limit, a universal method for the estimation of variation for different metrics would be highly desirable. Second, it only provides the result of variation on graph limit not the graph-process itself. Lastly, as his proof is not constructive, the paper does not provide a specific estimation for the variations.

Our aim in this article is to provide satisfactory answers to above limitations using constructive approach. We use time-localization technique to estimate variations of exchangeable graph-valued processes and their graph-limit process. Our technique enables us to provide estimate for different types of metrics. Moreover, we are able to directly estimate the variation of graph-valued process itself, rather than studying its graph limit. Additionally, the method enables us to improve Theorem 3.3 of \cite{Crane:2015we}: we were able to define appropriate metric on the space of graph limit so that infinite-dimensional total variation is also bounded as well as finite-dimensional total variation.

\section{Preliminaries}
A graph is a set of vertices $V$ with its connecting edges. Throughout the
paper, all graphs are undirected and have no self-loop. We denote the set of
graphs with $n$ vertices as $\scG_{n}$ and let
$V=[n]:=\crl{1,2,...,n}$ be the vertex set. When $V$ is countably many
vertices, then we set $V=\bbN:=\crl{1,2,...}$ and denote such infinite
graph as $\scG_{\infty}$. We denote
$\bar\scG:=\cup_{n\in\bbN}\scG_{n}$. 

 Since we are concerning about undirected graph $G$, it can be
represented by its adjacency matrix $(G^{ij})_{i,j\in V}$ for which
\begin{align*}
G^{ij}=\left\{\begin{array}{ll}1&\text{ if vertices $i$ and $j$ are connected by an edge}\\
  0&\text{ otherwise}\end{array}\right..
\end{align*}
Note that $(G^{ij})_{i,j\in V}$ is symmetric and have zero diagonal entries because we
assumed it
is undirected and has no self-loop. 

We define a function $J_{n}:\scG_{n}\times\scG_{n}\to\bbN$
\[
J_{n}(F,G)=\frac{{\rm tr}(F-G)(F-G)^{T}}{n(n-1)}=\frac{\sum_{i,j\in[n]}(F^{ij}-G^{ij})^{2}}{n(n-1)},
\]
where we treat $F$ and $G$ as adjacency matrices. The $J_n$ represents for the portion of differences between $F$ and $G$ and it can be seemed as a graph edit distance. Note that $J_n(F,G)$ depends on the labeling of vertices.  For $F, G\in \scG_{\infty}$, we define, if it exists,
\[
J(F,G):=\lim_{n\to\infty}J_{n}(F|_n, G|_n)
\]
where $F|_n$ and $G|_n$ is the restriction of $F$ and $G$ for the first $n$ vertices. Note that $J$ is not a metric on $\scG_\infty$ because $J(F,G)=0$ does not imply $F=G$.

For a permuation
$\sigma:\bbN\to\bbN$, we denote
\begin{align*}
G^{\sigma}=(G^{\sigma(i)\sigma(j)})_{i,j\in\bbN}.
\end{align*}
In general, for an injective function $\varphi:[n]\to[m]$, we denote
\begin{align*}
G^{\varphi}=(G^{\varphi(i)\varphi(j)})_{i,j\in[n]} .
\end{align*}


In this article, 
for a graph-valued process $\bfG=(\Gamma_{t})_{t\geq 0}$ on
$\scG_{\infty}$, we assume the following conditions.
\begin{itemize}
\item[(i)](c\`adl\`ag) $\bfG|_n$ has c\`adl\`ag sample paths under product-discrete topology.
\item[(ii)](exchangeable) For any permutation $\sigma:\bbN\to\bbN$,
  $\bfG^{\sigma}$ is a version of $\bfG$.
\item[(iii)] (left-continuity under $J$) For any $t>0$,
$
J(\Gamma_{t},\Gamma_{t-})=0.
$
\item[(iv)] For all $0\leq s<t\leq 1$,  there is $u\in(s,t)$ such that 
$ J(\Gamma_u,\Gamma_s)>0.$
\end{itemize}
\begin{Remark}
Note that condition (i) and (ii) implies the well-definedness of $J(\Gamma_s,\Gamma_t)$ for any $s,t\in\bbR_+$ by the following Strong Law of Large Number exchangeable symmetric random array.
\begin{theorem}(SLLN, \cite{Eagleson:2008ix}) Let $X$ be an exchangeable symmetric random array with zero diagonal entries and $X^{ij}=1$ or $0$. Then,
	\[
	T_n:=\frac{2}{n(n-1)}\sum_{1\leq i<j\leq n}X^{ij}\xrightarrow[a.s.]{n\to\infty}\bbE[X^{12}|\cap_{n=1}^\infty\scF_n]
	\]
	where $\scF_n=\sigma(\crl{T_n,T_{n+1},...})$.
\end{theorem}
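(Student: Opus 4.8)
The plan is to run the classical reverse‑martingale argument for exchangeable arrays, arranged so that the limiting $\sigma$-field comes out to be exactly the tail field of $(T_n)$. For each $n\in\bbN$ let $\scE_n$ be the sub-$\sigma$-algebra of $\sigma(X)$ generated by all random variables that are invariant under permutations of the index block $\crl{1,\dots,n}$ (fixing every $k>n$); these are decreasing in $n$, and write $\scE_\infty:=\cap_n\scE_n$ for the exchangeable $\sigma$-field. The first step is to prove $T_n=\bbE[X^{12}\mid\scE_n]$ a.s. Given $1\le i<j\le n$, choose a permutation $\pi$ of $\crl{1,\dots,n}$ with $\pi(1)=i$, $\pi(2)=j$; then $X^{ij}=(X^\pi)^{12}$, while any $A\in\scE_n$ satisfies $\mathbf 1_A(X)=\mathbf 1_A(X^\pi)$, so exchangeability ($X^\pi\overset{d}{=}X$) gives $\bbE[X^{ij}\mathbf 1_A]=\bbE[(X^\pi)^{12}\mathbf 1_A(X^\pi)]=\bbE[X^{12}\mathbf 1_A]$, i.e.\ $\bbE[X^{ij}\mid\scE_n]=\bbE[X^{12}\mid\scE_n]$. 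Averaging this over the $\binom n2$ pairs $i<j\le n$ and using that $T_n$ is itself $\scE_n$-measurable (it is a symmetric function of $(X^{ij})_{i,j\le n}$, and the zero diagonal lets us ignore the diagonal terms), we get $\bbE[X^{12}\mid\scE_n]=\bbE[T_n\mid\scE_n]=T_n$.

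Second, since $(\scE_n)$ is a decreasing filtration, $(T_n)=(\bbE[X^{12}\mid\scE_n])$ is a backward martingale, and it is uniformly integrable because $0\le T_n\le 1$. L\'evy's downward convergence theorem then yields $T_n\to\bbE[X^{12}\mid\scE_\infty]$ almost surely and in $L^1$.

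The last step, which I expect to be the delicate point, is to replace $\scE_\infty$ by $\scF_\infty:=\cap_n\scF_n$ with $\scF_n=\sigma(T_n,T_{n+1},\dots)$: the naive reverse-martingale argument only delivers the conditional expectation given the full exchangeable field, whereas the statement asserts convergence to $\bbE[X^{12}\mid\cap_n\scF_n]$, and these must be matched. One inclusion is easy: $T_m$ is $\scE_m$-measurable and $\scE_m\subseteq\scE_n$ for $m\ge n$, so $\scF_n\subseteq\scE_n$ and hence $\scF_\infty\subseteq\scE_\infty$. For the other direction, $\limsup_n T_n$ is a Borel function of $(T_m,T_{m+1},\dots)$ for every $m$, hence $\scF_\infty$-measurable, and it coincides a.s.\ with the limit $\bbE[X^{12}\mid\scE_\infty]$; thus $\bbE[X^{12}\mid\scE_\infty]$ is $\scF_\infty$-measurable, and since $\scF_\infty\subseteq\scE_\infty$ the tower property forces $\bbE[X^{12}\mid\scE_\infty]=\bbE[\bbE[X^{12}\mid\scE_\infty]\mid\scF_\infty]=\bbE[X^{12}\mid\scF_\infty]$. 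Combining the three steps gives $T_n\to\bbE[X^{12}\mid\cap_n\scF_n]$ almost surely, as claimed. (Minor measure-theoretic care with completions of the $\scF_n$ is needed but routine.)
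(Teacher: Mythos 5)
Your proof is correct. The paper itself offers no argument for this statement --- it is quoted as a known result from the cited reference --- so there is nothing in-paper to compare against; what you give is the standard reverse-martingale proof of the SLLN for exchangeable arrays, and all three steps are sound: the identity $T_n=\bbE[X^{12}\mid\scE_n]$ (using that $T_n$ is a symmetric function of $(X^{ij})_{i,j\le n}$ and that exchangeability transports $X^{ij}$ to $X^{12}$ against permutation-invariant events), the backward-martingale convergence, and the identification of the limit $\sigma$-field, where your observation that $\limsup_n T_n$ is $\cap_n\scF_n$-measurable and a.s.\ equal to $\bbE[X^{12}\mid\scE_\infty]$ correctly closes the gap between the exchangeable field and the tail field of $(T_n)$.
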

\end{Remark}
\begin{Remark}
The condition (iii) means that the discontinuities of $\bfG$ are only of type
($B$) in \cite{Crane:2015we}. In other words, when $\bfG$ changes, zero
portion of total edges changes. Note that we are dealing
with infinite graphs and this type of jumps includes any finite
simultaneous edge jumps. This condition can be relaxed easily. Note that from Corollary \ref{estN}, the event of nonzero portion of edges jumps simultaneously can happen only finite number of times. This term add only a finite number to the multi-order variation. Therefore, it does not affect whether the variations are finite or infinite.
\end{Remark}
\section{Variation of graph}
The c\`adl\`ag path of $\bfG=(\Gamma_t)_{t=1}^\infty$ implies the following lemma.
\begin{lemma}\label{billing}
	For any $i,j\in\bbN$, $\bfG^{ij}$ has finitely many jumps on $[0,1]$
	with probability $1$. 
\end{lemma}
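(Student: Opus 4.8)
The plan is to fix $i,j\in\bbN$ and exploit the c\`adl\`ag hypothesis (i) directly. By condition (i), the restricted process $\bfG|_n$ has c\`adl\`ag sample paths in the product-discrete topology for every $n$; choosing $n\geq\max\{i,j\}$, the coordinate map $H\mapsto H^{ij}$ from $\scG_n$ (with the discrete topology) to $\{0,1\}$ is continuous, so $\bfG^{ij}=(\Gamma_t^{ij})_{t\in[0,1]}$ is itself a c\`adl\`ag process taking values in the discrete space $\{0,1\}$. The key observation is that a c\`adl\`ag path into a discrete space is necessarily piecewise constant with only finitely many jumps on any compact interval.

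First I would make that key observation precise. Suppose, for contradiction, that on some sample path $\omega$ the function $t\mapsto\Gamma_t^{ij}(\omega)$ has infinitely many jumps in $[0,1]$. Then there is an infinite set of jump times, and by Bolzano--Weierstrass it has an accumulation point $t_0\in[0,1]$. I would split into two cases according to whether the jump times accumulate at $t_0$ from the left or from the right (at least one must occur). If they accumulate from the left, then along a sequence $t_k\uparrow t_0$ the values $\Gamma_{t_k}^{ij}$ cannot converge, since between consecutive near-by jump times the value alternates or at least fails to stabilize in $\{0,1\}$; more carefully, one can extract times $s_k<u_k<t_0$ with $s_k,u_k\uparrow t_0$ and $\Gamma_{s_k}^{ij}\neq\Gamma_{u_k}^{ij}$, which contradicts the existence of the left limit $\Gamma_{t_0-}^{ij}$. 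If they accumulate from the right, the same construction with $s_k,u_k\downarrow t_0$ and $\Gamma_{s_k}^{ij}\neq\Gamma_{u_k}^{ij}$ contradicts right-continuity at $t_0$ (the value $\Gamma_{t_0+}^{ij}=\Gamma_{t_0}^{ij}$). Either way we reach a contradiction, so on every path with c\`adl\`ag coordinate $\bfG^{ij}$ there are only finitely many jumps on $[0,1]$.

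Finally I would invoke hypothesis (i) once more: the set of $\omega$ for which $\bfG|_n$ has c\`adl\`ag paths has probability $1$, hence so does the set on which $\bfG^{ij}$ is c\`adl\`ag, and on that set the previous paragraph gives finitely many jumps of $\bfG^{ij}$ on $[0,1]$. This yields the claim with probability $1$.

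I do not anticipate a serious obstacle here; the only point requiring a little care is the elementary topology step --- articulating cleanly why a c\`adl\`ag function valued in the two-point discrete space $\{0,1\}$ cannot have a limit point of jump times --- and making sure the left-accumulation versus right-accumulation cases are both handled so that the contradiction uses the correct one of left-limit existence or right-continuity.
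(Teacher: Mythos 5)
Your proof is correct and rests on the same fact as the paper's: a c\`adl\`ag path with values in the discrete space $\{0,1\}$ cannot have an accumulation point of jump times on a compact interval. The paper simply outsources this step by citing Billingsley's oscillation lemma (a finite partition of $[0,1]$ on which the oscillation is below $\tfrac12$), whereas you prove the same fact from first principles via the left/right accumulation dichotomy; both arguments are sound and essentially identical in content.
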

\begin{proof}
	Note that $\bfG^{ij}$ is a c\`adl\`ag with value $1$ or $0$. By
	Billingsley (2009, p 122), one can select a sequence of points $t_{0},t_{1},\cdots, t_{v}$ such that
	$0=t_{0}<t_{1}<\cdots<t_{v}=1$ and 
	\begin{align*}
	\sup_{s,t\in[t_{k-1},t_{k})}|\Gamma^{ij}_{s}-\Gamma^{ij}_{t}|&<\half,\qquad k=1,2,...,v
	\end{align*}
	Therefore, jump can happen only at $t_{1},t_{2},...,
	t_{v}$. Therefore, $\bfG^{ij}$ has finitely many jumps on $[0,1]$.
\end{proof}
Let us define an increasing sequence of stopping times $(\tau^{p}_{k})_{k=0}^\infty$. We will show that this will be a localizing sequence. We define $\tau^{p}_{0}:=0$  and, for $k\geq 1$,
\[
\tau^{p}_{k}:=\inf\crl{t>\tau_{k-1}^{p}: J(\Gamma_{t},\Gamma_{\tau_{k-1}^{p}})=p}.
\]

Let us define
\[
N_{p}:=\inf\crl{k\in\bbN:\tau^{p}_{k}> 1}.
\]
Note that $\tau^{p}_{N_{p}-1}\leq 1<\tau^{p}_{N_{p}}$ if $N_p<\infty$.
Let $j(e)$ to be the number of jumps for the edge $e$ of $\bfG$ during $[0,1]$.
\begin{proposition}\label{jumpest}
	For almost every $\omega\in\Omega$, there exists an edge $e$, which may depend on $\omega$, such that 
	\[
	j(e)\geq \sup_{p\in(0,1)}pN_p(\omega)-1
	\]
\end{proposition}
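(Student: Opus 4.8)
The plan is to extract, for $\mathbb P$-almost every $\omega$, a single edge that is forced to jump at least $pN_p(\omega)-1$ times for every $p\in(0,1)$, by combining the jump-counting with the definition of the localizing times $\tau^p_k$. First I would fix $p\in(0,1)$ and work on the event $\{N_p<\infty\}$ (the case $N_p=\infty$ is discussed below). On this event the times $\tau^p_0=0<\tau^p_1<\cdots<\tau^p_{N_p-1}\le 1$ are well defined, and by construction $J(\Gamma_{\tau^p_k},\Gamma_{\tau^p_{k-1}})=p>0$ for $k=1,\dots,N_p-1$. The key observation is that if no edge of $\bfG$ jumps on $(\tau^p_{k-1},\tau^p_k]$, then $\Gamma_t=\Gamma_{\tau^p_{k-1}}$ for all $t$ in that interval and hence $J(\Gamma_{\tau^p_k},\Gamma_{\tau^p_{k-1}})=0$, a contradiction. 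So on each of the $N_p-1$ successive intervals at least one edge jumps. By the pigeonhole principle this does \emph{not} immediately give one edge with many jumps; instead I would argue as follows: by Lemma \ref{billing} each individual edge has only finitely many jumps on $[0,1]$ a.s., and I would use exchangeability together with condition (iii) to show that in fact only finitely many edges jump at all on $[0,1]$ — more precisely, that the \emph{total} number of edge-jumps on $[0,1]$ is what must absorb the count $N_p-1$, rescaled.

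The cleaner route is to quantify each step $J(\Gamma_{\tau^p_k},\Gamma_{\tau^p_{k-1}})=p$ directly in terms of edge jumps. Because $J$ is the limiting fraction of disagreeing edges, the relation $J(\Gamma_{\tau^p_k},\Gamma_{\tau^p_{k-1}})=p$ says that an asymptotic fraction $p$ of all pairs $\{i,j\}$ have $\Gamma^{ij}_{\tau^p_k}\ne\Gamma^{ij}_{\tau^p_{k-1}}$, so each such pair's edge process jumped at least once on $(\tau^p_{k-1},\tau^p_k]$. Summing the indicator ``edge $e$ changed value between $\tau^p_{k-1}$ and $\tau^p_k$'' over $k=1,\dots,N_p-1$ and over all edges $e$, and using that these intervals are disjoint so a single jump of $e$ is counted at most once per $k$, one gets that $\sum_e j(e)$ is bounded below by roughly $(N_p-1)$ times the (infinite) number of edges. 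Dividing by $\binom n 2$ and passing to the limit, the natural conclusion is $\limsup_n \tfrac{1}{\binom n2}\sum_{e\subseteq[n]} j(e)\ge p(N_p-1)\ge pN_p-1$, valid for all $p$, hence $\ge \sup_p pN_p-1$. Then, since this average is $\ge \sup_p pN_p-1$, there must exist at least one edge $e$ with $j(e)\ge \sup_p pN_p-1$ — this is the required edge, and a.s. it exists simultaneously for the supremum because the averaged bound holds pathwise.

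I would handle the case $\sup_p pN_p=\infty$ (which includes $N_p=\infty$ for some $p$) separately: if $N_p=\infty$ then there are infinitely many disjoint intervals on $[0,1]$ across which $\Gamma$ genuinely changes in the $J$-sense, which by condition (iii) (left-continuity under $J$, so no $J$-jumps) forces a violation — specifically it contradicts c\`adl\`ag-ness in the relevant sense or forces some edge to have infinitely many jumps, contradicting Lemma \ref{billing}; in either sub-case the inequality holds vacuously or the edge with infinitely many jumps does the job. The main obstacle I anticipate is the interchange of the supremum over $p$ with the almost-sure statement and with the limit in $n$: one needs the bound $\tfrac{1}{\binom n2}\sum_e j(e)\ge p(N_p-1)$ to hold on a single full-measure event for all $p$ at once, which I would arrange by first proving it for all rational $p$ on a countable intersection of full-measure events and then using monotonicity/right-continuity properties of $p\mapsto N_p$ (which follow from the definition of $\tau^p_k$) to upgrade to all $p\in(0,1)$. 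A secondary technical point is justifying that $J(\Gamma_{\tau^p_k},\Gamma_{\tau^p_{k-1}})=p$ exactly (not just $\ge p$) at the stopping times, which uses condition (iii) to rule out overshoot — this is where left-continuity under $J$ is essential.
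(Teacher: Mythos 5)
Your proposal is correct and follows essentially the same route as the paper: lower-bound $j(e)$ by the number of disjoint localization intervals $(\tau^p_{k-1},\tau^p_k]$ over which edge $e$ changes value, use the definition of $J$ (via the exchangeable SLLN) to show the edge-average of these counts is asymptotically $p(N_p-1)$, and conclude by an averaging/pigeonhole argument that a single edge must attain $\sup_p pN_p-1$, with the case $\sup_p pN_p=\infty$ dispatched by Lemma \ref{billing}. The only cosmetic difference is that the paper phrases the final step as a proof by contradiction using Fatou and Fubini rather than your direct averaging formulation.
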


\begin{proof}
	Since we have countable number of edges, let us denote with index
	$l=1,2,...$. Let us define random variables
	\begin{align*}
	X^{kl}=|\Gamma^{l}_{\tau^{p}_{k}}-\Gamma^{l}_{\tau^{p}_{k-1}}|.
	\end{align*}
	Note that the law of $\crl{X^{kl}}$ is invariant under relabeling of vertices because $(\tau^p_k)$ is not affected by relabeling. Therefore, it should be an exchangeable random array for given $k$.
Assume that with strictly positive probability, $j(l)<\sup_{p\in(0,1)}pN_p-1$ for all edge $l$. Note that $\sup_{p\in(0,1)}\sum_{k=1}^{N_{p}-1}X^{kl}$ is a lower bound for $j(l)$. Then we have
\begin{align*}
\sup_{p\in(0,1)}pN_p-1>\liminf_{n\to\infty}\frac{1}{n}\sum_{l=1}^nj(l)\geq\liminf_{n\to\infty}\frac{1}{n}\sum_{l=1}^n\sup_{p\in(0,1)}\brak{\sum_{k=1}^{N_{p}-1}X^{kl}}\geq\liminf_{n\to\infty}\sup_{p\in(0,1)}\brak{\frac{1}{n}\sum_{l=1}^n\sum_{k=1}^{N_{p}-1}X^{kl}}
\end{align*}
with strictly positive probability. Note that
\[
\liminf_{n\to\infty}\sup_{p\in(0,1)}\brak{\frac{1}{n}\sum_{l=1}^n\sum_{k=1}^{N_{p}-1}X^{kl}}\geq \sup_{p\in(0,1)}\brak{\liminf_{n\to\infty}\frac{1}{n}\sum_{l=1}^n\sum_{k=1}^{N_{p}-1}X^{kl}}.
\]
By Fubini theorem, Fatou's lemma, and the law of large number, we get
\[
\liminf_{n\to\infty}\frac{1}{n}\sum_{l=1}^{n}\sum_{k=1}^{N_{p}-1}X^{kl}\geq\sum_{k=1}^{N_{p}-1}\liminf_{n\to\infty}\frac{1}{n}\sum_{l=1}^{n}X^{kl}=pN_p-p.
\]
This implies
\[
\sup_{p\in(0,1)}pN_p-1>\sup_{p\in(0,1)}\brak{pN_p-p}.
\]
with strictly positive probability and this is a contradiction if $\sup_{p\in(0,1)}pN_p<\infty$. If $\sup_{p\in(0,1)}pN_p=\infty$, then we have $j(l)=\infty$ and it contradicts to Lemma \ref{billing}.
\end{proof}
\begin{corollary}\label{estN} For any $p\in(0,1)$,
	$N_p<\infty$ and $\sup_{p\in(0,1)}pN_p<\infty$ almost surely. Moreover, if there exists $p'\in(0,1)$ such that $N_{p'}>1$, then $\limsup_{p\to 0}pN_p>0$
\end{corollary}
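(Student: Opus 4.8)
The plan is to read the two finiteness statements straight off Proposition~\ref{jumpest} and Lemma~\ref{billing}, and to get the last statement from a counting estimate along the partition $(\tau^p_k)_{k\ge0}$. For the finiteness: by Proposition~\ref{jumpest}, for almost every $\omega$ there is an edge $e$ with $j(e)\ge\sup_{p\in(0,1)}pN_p(\omega)-1$, so $\sup_{p\in(0,1)}pN_p(\omega)\le j(e)+1$. Since the edge set is countable, Lemma~\ref{billing} shows that almost surely \emph{every} edge has only finitely many jumps on $[0,1]$; in particular $j(e)<\infty$, so $\sup_{p\in(0,1)}pN_p<\infty$ a.s., and fixing $p$ gives $N_p\le p^{-1}\sup_{q\in(0,1)}qN_q<\infty$ a.s.

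For the last assertion, recall first that $J$ obeys the triangle inequality: since all adjacency entries lie in $\{0,1\}$, $n(n-1)J_n(F,G)=\sum_{i,j}|F^{ij}-G^{ij}|$ is a Hamming distance, hence subadditive, and this passes to the limit $J$; also $J$ is symmetric. By symmetry and condition~(iii), $t\mapsto J(\Gamma_t,\Gamma_s)$ is left-continuous for each fixed $s$, and together with the c\`adl\`ag structure of $\bfG$ this gives the ``no-overshoot'' property that $J(\Gamma_t,\Gamma_{\tau^p_k})\le p$ for all $t\in[\tau^p_k,\tau^p_{k+1})$, with $J(\Gamma_{\tau^p_k},\Gamma_{\tau^p_{k-1}})=p$. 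Granting this, suppose $N_{p'}>1$ for some $p'\in(0,1)$. Then $\tau^{p'}_1\le\tau^{p'}_{N_{p'}-1}\le1$ and $J(\Gamma_{\tau^{p'}_1},\Gamma_0)=p'$, so $\sup_{t\in[0,1]}J(\Gamma_t,\Gamma_0)\ge p'$. On the other hand, for $p\in(0,p')$ and $t\in[0,1]$, taking $k\le N_p-1$ with $\tau^p_k\le t<\tau^p_{k+1}$ and telescoping along $0=\tau^p_0<\cdots<\tau^p_k$,
\[
J(\Gamma_t,\Gamma_0)\le\sum_{j=1}^kJ(\Gamma_{\tau^p_j},\Gamma_{\tau^p_{j-1}})+J(\Gamma_t,\Gamma_{\tau^p_k})\le kp+p\le N_p\,p .
\]
Taking the supremum over $t\in[0,1]$ yields $N_p\,p\ge p'$ for all $p\in(0,p')$, hence $\limsup_{p\to0}pN_p\ge p'>0$.

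The main obstacle is the ``no-overshoot'' property of $t\mapsto J(\Gamma_t,\Gamma_s)$ — that the path attains a level before exceeding it, so that the infima defining the $\tau^p_k$ are attained with value exactly $p$ and the middle inequality in the display holds. Left-continuity (condition~(iii)) excludes jumps from the left, but one must still exclude, from the c\`adl\`ag paths of $\bfG$ together with the representation $J=\lim_nJ_n$, a jump of this path across a level from the right; the boundary case $\tau^{p'}_1=1$ is absorbed into the same regularity.
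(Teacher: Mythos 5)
Your first assertion ($N_p<\infty$ and $\sup_{p}pN_p<\infty$ a.s.) is proved exactly as in the paper: combine Proposition~\ref{jumpest} with the countability of the edge set and Lemma~\ref{billing}. For the last assertion your route is genuinely different. The paper nests the stopping-time sequences at successive levels: it argues $\tau^{p'}_{N_{p'}-1}\geq\tau^{p'/2}_{2(N_{p'}-1)}$, hence $N_{p'/2}-1\geq 2(N_{p'}-1)$, so that $a_k:=p'2^{-k}(N_{p'2^{-k}}-1)$ is nondecreasing and bounded, giving $\limsup_{p\to0}pN_p\geq p'(N_{p'}-1)>0$. You instead fix the single quantity $\sup_{t\in[0,1]}J(\Gamma_t,\Gamma_0)$, bound it below by $p'$ using $\tau^{p'}_1\leq 1$, and above by $pN_p$ by telescoping the triangle inequality for $J$ along the partition $(\tau^p_k)$, concluding $pN_p\geq p'$ for all small $p$. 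Your bound is slightly weaker when $N_{p'}>2$ (the paper gets $p'(N_{p'}-1)$), but it suffices for the stated claim, and your argument has the virtue of making explicit the two ingredients the paper uses silently: that $J$ is a (pseudo)metric satisfying the triangle inequality, and that the level sets defining the $\tau^p_k$ are actually attained with value $p$.

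The ``no-overshoot'' property you flag is a real gap: left-continuity of $t\mapsto J(\Gamma_t,\Gamma_s)$ (which itself requires more than condition~(iii) literally states, namely $J(\Gamma_{t'},\Gamma_{t-})\to0$ as $t'\uparrow t$) does not by itself prevent the path from jumping across the level $p$ from the right --- a left-continuous $\{0,1\}$-valued step function is the standard counterexample --- and the c\`adl\`ag hypothesis is only entrywise, so it does not control $J=\lim_nJ_n$ without some uniformity in $n$. However, you should be aware that the paper's own proof of this corollary rests on exactly the same unproven regularity: the inequality $\tau^{p'}_{k}\geq\tau^{p'/2}_{2k}$ requires that the path from $\Gamma_{\tau^{p'}_{k-1}}$ attains $J$-distance $p'/2$ (twice) before attaining $p'$, which is the same attainment statement. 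So relative to the paper's standard of rigor your proof is acceptable; an honest writeup of either argument would need an additional lemma establishing attainment of levels by $t\mapsto J(\Gamma_t,\Gamma_s)$.
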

\begin{proof}
	It is obvious from the previous proposition that $N_p$ and $\sup_{p\in(0,1)}pN_p$ should be finite almost surely. Otherwise,there exists an edge that jumps infinitely many times with positive probability, which contradicts Lemma \ref{billing}. Therefore, the first part of the claim is proved.
	
	On the other hand, note that $1\geq \tau^{p'}_{N_{p'}-1}\geq \tau^{p'/2}_{2(N_{p'}-1)}$ and therefore, $N_{p'/2}-1\geq 2N_{p'}-2$. This implies 
	\[
	\frac{p'}{2} (N_{p'/2}-1)\geq p'(N_{p'}-1).
	\]
	 and therefore, $(a_k)_{k=0}^\infty$ where $a_k:=p'2^{-k}(N_{p'2^{-k}}-1)$ is an increasing sequence bounded above by $\sup_{p\in(0,1)}pN_p$. As a result, $(a_k)_{k=0}^\infty$ converges and
	\[
	\limsup_{p\to 0}pN_p\geq\lim_{k\to\infty}p'2^{-k}(N_{p'2^{-k}}-1)\geq p'(N_{p'}-1)>0
	\]
	since $N_{p'}>1$.
\end{proof}
The following corollary says that $(\tau_k^p)$ is indeed a localizing sequence.
\begin{corollary}
	For any $p\in(0,1)$, $\tau^{p}_{i}\nearrow\infty$ almost surely.
\end{corollary}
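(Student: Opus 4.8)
The plan is to deduce this from the almost sure finiteness of $N_p$ already obtained in Corollary \ref{estN}, after upgrading that finiteness statement from the fixed horizon $[0,1]$ to an arbitrary horizon $[0,T]$. First I would record the easy structural fact that, by construction, $k\mapsto\tau^p_k$ is non-decreasing — in fact $\tau^p_k>\tau^p_{k-1}$ whenever $\tau^p_k<\infty$, since $J(\Gamma_t,\Gamma_t)=0\neq p$ — so the monotone limit $\tau^p_\infty:=\lim_{k\to\infty}\tau^p_k$ exists in $[0,\infty]$, and the only thing left to prove is that $\tau^p_\infty=\infty$ almost surely. Note that condition (iv) plays no role here: if $\bfG$ were eventually constant, each $\tau^p_k$ beyond some index would simply equal $+\infty$, which is consistent with $\tau^p_i\nearrow\infty$.

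Next, I would fix $T>0$, set $N^T_p:=\inf\crl{k\in\bbN:\tau^p_k>T}$, and argue that $N^T_p<\infty$ almost surely. This is precisely Corollary \ref{estN} with $[0,1]$ replaced by $[0,T]$, and its proof transfers without change: a c\`adl\`ag $\crl{0,1}$-valued path still has only finitely many jumps on the compact interval $[0,T]$ (Lemma \ref{billing}), and the Fubini/Fatou/SLLN chain of inequalities in the proof of Proposition \ref{jumpest} never used the value $T=1$ — the law of large numbers there is taken over the edge index $l$, not over time. Cleanly, one can just apply the already-proved statements to the time-rescaled process $(\Gamma_{Tt})_{t\geq0}$, whose analogous localizing times are $\tau^p_k/T$. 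On the event $\crl{N^T_p<\infty}$ one has $\tau^p_{N^T_p}>T$, hence $\tau^p_\infty\geq\tau^p_{N^T_p}>T$; so $\tau^p_\infty>T$ almost surely.

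Finally I would intersect over $T\in\bbN$: each event $\crl{\tau^p_\infty>T}$ has probability one, so their countable intersection $\crl{\tau^p_\infty=\infty}$ has probability one as well, which together with monotonicity gives $\tau^p_i\nearrow\infty$ almost surely. The only mild obstacle is justifying that the earlier results genuinely localize to $[0,T]$, and I would handle this by the explicit time-rescaling just described so as to avoid rewriting those arguments. An essentially equivalent route is by contradiction: if $\tau^p_\infty<\infty$ with positive probability, then for some deterministic $T$ the event $\crl{\tau^p_\infty\leq T}$ has positive probability, and on that event $\crl{k\in\bbN:\tau^p_k\leq T}=\bbN$, i.e. $N^T_p=\infty$, contradicting $N^T_p<\infty$ a.s.
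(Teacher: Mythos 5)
Your proof is correct and follows essentially the same route as the paper: the paper also argues that if $\tau^p_k\leq T$ for all $k$ with positive probability, then (after reducing to $T=1$, i.e.\ your time-rescaling step) $N_p=\infty$ with positive probability, contradicting Corollary \ref{estN}. Your version merely spells out the rescaling and the countable intersection over $T\in\bbN$ more explicitly.
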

\begin{proof}
	Assume otherwise: there exists $T\in\bbR$ such that
	$\tau^{p}_{k}\leq T$ for all $i\in\bbN$ with positive probability. Without loss of generality, we can assume $T=1$. This implies that $N_{p}=\infty$ with strictly positive probability which is a contradiction.
\end{proof}

Let us define the following $\alpha$-order variation of graph valued process.
\begin{definition}
	Let $d$ be a metric on $\scG_\infty$. Let $\alpha\in[1,\infty)$. For a graph valued process $\bfG=(\Gamma_t)_{t\geq 0}$ satisfying (i)--(iv), we define $\alpha$-order variation on $[0,1]$ by
	\[
	\norm{\bfG}_{\alpha[0,1]}:=\lim_{M\to\infty}\lim_{p\to0}\sum_{k=1}^{N_{p}}\abs{d\brak{P_M(\Gamma_{\tau^{p}_k}),P_M(\Gamma_{\tau^p_{k-1}})}}^\alpha
	\]
	where $P_m:\scG_\infty\to\scG_\infty$ is given by
	$(P_m(X))^{ij}:=X^{ij}\indicator{i,j\in[m]}$ for all $i,j\in\bbN$.
\end{definition}

The conventional definition of $\alpha$-order variation would be
\[
\lim_{\norm{\scP_N}\to0}\sum_{k=1}^N\abs{d(\Gamma_{t_i},\Gamma_{t_{i-1}})}^\alpha 
\]
where $\scP_N$ is a partition of $[0,1]$ with points $\crl{t_i:0=t_0<t_1<\cdots<t_N=1}$ and $\norm{\scP_N}:=\max_{i}|t_i-t_{i-1}|$. 
Our definition may seem slightly different in two aspects. First, we partition the time with localizing sequence of stopping times.
Indeed, if only $0$ portion of total edges of $\bfG$ jumps during $[0,1]$, this definition does not coinside with classical definition since $\tau^p_1=\infty$ for any $p>0$. However, under our assumption (iv), our definition is consistent with the conventional definition because $\tau^p_k-\tau^p_{k-1}\to0$ as $p\to0$. Another difference is that, in our definition, we localize $\bfG$ to the first $M$ vertices, calculate $\alpha$-variance, and then send $M$ to infinity. When $\bfG$ has finite number of vertices, it is consistent with the usual $\alpha$-order variation. Our motivation of the infinite graph comes from our attempt to approximate a large graph. In this sense, our definition is justified.

\begin{theorem}\label{graphvar}
	Let us denote perm$[m]$ to be the permutation on the first $m$ number of vertices. 
		Assume that, for $G,F\in\scG_\infty$,
		\[
		d(P_M(G),P_M(F))\nearrow d(G,F) \text{ as } M\nearrow \infty
		\]
	\[
	\lim_{m\to\infty}\frac{1}{m!}\sum_{\sigma\in\text{perm}[m]}|d(F^\sigma, G^\sigma)|^\alpha\leq CJ(F,G)
	\]
	for some $C$. Then, $\bfG$ has finite $\alpha$-variation almost surely.
\end{theorem}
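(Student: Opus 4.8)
The plan is to control the pre-limit sums
\[
A_{M,p}:=\sum_{k=1}^{N_p}\abs{d\brak{P_M(\Gamma_{\tau^p_k}),P_M(\Gamma_{\tau^p_{k-1}})}}^\alpha,
\]
for which $\norm{\bfG}_{\alpha[0,1]}=\lim_{M\to\infty}\lim_{p\to0}A_{M,p}$, by establishing the conditional estimate $\bbE\edg{A_{M,p}\mid\scI}\le C\,pN_p$, where $\scI$ denotes the $\sigma$-field of relabelling-invariant events. Granting this, and recalling from Corollary~\ref{estN} that $pN_p\le\sup_{q\in(0,1)}qN_q<\infty$ almost surely with $\sup_q qN_q$ being $\scI$-measurable, the estimate for $A_{M,p}$ is uniform in $M$ and $p$ and will survive the double limit, as carried out in the last paragraph.

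The heart of the matter is a symmetrisation, performed for each $M$, $p$ and index $k\ge1$ separately. Because $J$ is unchanged under relabelling of the vertices, so are the stopping times $(\tau^p_j)_j$, the count $N_p$, and the event $\crl{k\le N_p}$; together with exchangeability (ii) this shows that, conditionally on $\scI$, the pair $(\Gamma^\sigma_{\tau^p_k},\Gamma^\sigma_{\tau^p_{k-1}})$ has the same law as $(\Gamma_{\tau^p_k},\Gamma_{\tau^p_{k-1}})$ for every permutation $\sigma$. Hence $\bbE\edg{\abs{d\brak{P_M(\Gamma_{\tau^p_k}),P_M(\Gamma_{\tau^p_{k-1}})}}^\alpha\indicator{k\le N_p}\mid\scI}$ is unchanged if its argument is replaced by the $\text{perm}[m]$-average $\tfrac1{m!}\sum_{\sigma\in\text{perm}[m]}\abs{d\brak{P_M(\Gamma^\sigma_{\tau^p_k}),P_M(\Gamma^\sigma_{\tau^p_{k-1}})}}^\alpha\indicator{k\le N_p}$. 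I would then discard $P_M$ using the monotonicity hypothesis $\abs{d\brak{P_M(\cdot),P_M(\cdot)}}^\alpha\le\abs{d\brak{\cdot,\cdot}}^\alpha$ and let $m\to\infty$: by the averaging hypothesis the $\text{perm}[m]$-average drops to at most $C\,J(\Gamma_{\tau^p_k},\Gamma_{\tau^p_{k-1}})\le C\,p$, the last inequality holding by construction of $\tau^p_k$. Therefore the conditional expectation of each summand is at most $C\,p\,\indicator{k\le N_p}$, and summing over $k\ge1$ and extracting the $\scI$-measurable indicator yields $\bbE\edg{A_{M,p}\mid\scI}\le C\,pN_p$.

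To close the argument I would use that $A_{M,p}$ is nondecreasing in $M$ (again by the monotonicity hypothesis) and that $\bbE\edg{A_{M,p}\mid\scI}\le C\,pN_p\le C\sup_q qN_q=:CK$, which is $\scI$-measurable and almost surely finite. Conditional Fatou gives $\bbE\edg{\liminf_{p\to0}A_{M,p}\mid\scI}\le CK$ for every $M$; since $\liminf_{p\to0}A_{M,p}$ is nondecreasing in $M$, conditional monotone convergence gives $\bbE\edg{\lim_{M\to\infty}\liminf_{p\to0}A_{M,p}\mid\scI}\le CK<\infty$ almost surely, whence $\lim_{M\to\infty}\liminf_{p\to0}A_{M,p}<\infty$, that is, $\norm{\bfG}_{\alpha[0,1]}<\infty$, almost surely.

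The step I expect to be the main obstacle is the interchange of $m\to\infty$ with the conditional expectation in the symmetrisation: this requires an integrability input on $d$, and a uniform bound $d\le D$ — which holds for the cut-type metrics one has in mind — makes dominated convergence immediate, whereas in general only a one-sided inequality is available. The remaining ingredients — conditional exchangeability given $\scI$, the relabelling-invariance of $\tau^p_k$, the value of $J$ at the $\tau^p_k$ together with the already-established finiteness of each $\tau^p_k$, and the measure-theoretic limit exchanges — are routine, and the single boundary term $k=N_p$ is harmless, for the same reason as in the remark on type-($B$) discontinuities.
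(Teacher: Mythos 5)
Your overall strategy is the paper's: symmetrize each increment over relabelings of the vertices, use the averaging hypothesis to convert the symmetrized increment into $C\,J(\Gamma_{\tau^p_k},\Gamma_{\tau^p_{k-1}})=Cp$, sum to get $CpN_p$, and invoke Corollary \ref{estN}. The difference lies in how the symmetrization is realized, and it is not cosmetic. The paper never integrates over $\omega$: it fixes a realization and applies the deterministic averaging operator $\scE(X)=\lim_{m}\frac{1}{m!}\sum_{\sigma\in\text{perm}[m]}X(\sigma)$ to the path, so the hypothesis $\lim_{m}\frac{1}{m!}\sum_{\sigma}\abs{d(F^\sigma,G^\sigma)}^\alpha\leq CJ(F,G)$ applies verbatim and only Fatou in $\sigma$ is needed for the $M$- and $p$-limits; the price is that its conclusion is $\scE(\norm{\bfG(\omega)}_{\alpha[0,1]})<\infty$, i.e.\ finiteness of the \emph{permutation-averaged} variation, from which a.s.\ finiteness of the variation itself is asserted without further argument. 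You instead condition on the invariant $\sigma$-field $\scI$ and use conditional exchangeability; this ends cleanly with $\bbE[\norm{\bfG}_{\alpha[0,1]}\mid\scI]<\infty$ a.s., which genuinely implies the claim, but it forces you to pass the $m\to\infty$ limit through a conditional expectation.

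That interchange is the one real gap, and you have correctly located it. For each $m$ you have $\bbE[\abs{d(\Gamma_{\tau^p_k},\Gamma_{\tau^p_{k-1}})}^\alpha\mid\scI]=\bbE[\text{the perm}[m]\text{-average}\mid\scI]$, and the pathwise limit of the averages is at most $Cp$; but conditional Fatou only yields $\bbE[\liminf_m(\cdot)\mid\scI]\leq\liminf_m\bbE[(\cdot)\mid\scI]$, which bounds the wrong side. To conclude that the ($m$-independent) conditional expectation is at most $Cp$ you need reverse Fatou or dominated convergence, hence uniform integrability of the averages --- an input the theorem's hypotheses, being purely pathwise, do not supply. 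For the metrics the paper actually uses later ($d\leq 1$) this is harmless and your argument closes; as a proof of the theorem as stated it is incomplete at exactly that point, which is arguably why the paper phrases its hypothesis as a statement about the pathwise Ces\`aro limit and works with $\scE$ rather than $\bbE$. Conversely, your ending is tighter than the paper's, whose final step from ``$\scE$ of the variation is finite'' to ``the variation is finite a.s.'' is left implicit.
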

\begin{proof} 
	For a function $X: perm(\bbN)\to \bbR$, define
	\[
	\scE(X):=\lim_{m\to\infty}\frac{1}{m!}\sum_{\sigma\in\text{perm}[m]}X(\sigma)
	\]
	Note that $\scE(d(F^\cdot,G^\cdot))$ is an expectation $d(F,G)$ under random indexing. For a given realization of $\Gamma(\omega)$, by Fatou lemma, we have
\begin{align}
	\scE(\norm{\bfG(\omega)}_{\alpha[0,1]})&=\scE\brak{\lim_{M\to\infty}\lim_{p\to0}\sum_{k=1}^{N_{p}(\omega)}\abs{d\brak{P_M\brak{\Gamma^\sigma_{\tau^{p}_k}(\omega)},P_M\brak{\Gamma^\sigma_{\tau^p_{k-1}}(\omega)}}}^\alpha}\\
	&\leq \liminf_{M\to\infty}\liminf_{p\to0}\sum_{k=1}^{N_{p}(\omega)}\scE\brak{\abs{d\brak{P_M\brak{\Gamma^\sigma_{\tau^{p}_k}(\omega)},P_M\brak{\Gamma^\sigma_{\tau^p_{k-1}}(\omega)}}}^\alpha}\\
	&\leq \liminf_{p\to0}\sum_{k=1}^{N_{p}(\omega)}\scE\brak{\abs{d\brak{\Gamma^\sigma_{\tau^{p}_k}(\omega),\Gamma^\sigma_{\tau^p_{k-1}}(\omega)}}^\alpha}\\
	&\leq C \liminf_{p\to0}pN_p(\omega)<\infty
\end{align}
for almost every $\omega$ by Corollary \ref{estN}. This implies that $\alpha$-order variation is finite almost surely.
\end{proof}
\begin{Example}
	On $[0,1]$, $\bfG$ satisfying (i)--(iv) has finite $\alpha$-order variation for all $\alpha>2$ under the metric
	\[
	d(F,G):=\frac{1}{\max\crl{n:F|_n=G|_n}}
	\]
	on $\scG_{\infty}$. 
\end{Example}
\begin{proof}
	The first condition of Theorem \ref{graphvar} is trivially satisfied. For $F,G\in\scG_\infty$, let $\brak{F-G}^{ij}=\indicator{F^{ij}\neq G^{ij}}$ and $p=J(F,G)=J(F-G,0)$.
	\begin{align*}
	\lim_{m\to\infty}\frac{1}{m!}\sum_{\sigma\in\text{perm}[m]}|d(F^\sigma, G^\sigma)|^\alpha &=\lim_{m\to\infty}\frac{1}{m!}\sum_{\sigma\in\text{perm}[m]}|d((F-G)^\sigma, 0)|^\alpha\\
	&= \sum_{n=1}^\infty\frac{1}{n^\alpha}(1-p)^{n\choose 2} (1-(1-p)^n)\\
	&\leq p\sum_{n=1}^\infty\frac{1}{n^{\alpha-1}}(1-p)^{n\choose 2}\\
	&\leq p\sum_{n=1}^\infty\frac{1}{n^{\alpha-1}}
	\end{align*}
	where we used convention ${1 \choose 2}=0$. Therefore, if $\alpha>2$, then $\bfG$ has finite $\alpha$-order variation by our previous theorem.
\end{proof}
\section{Variation of graph limit}
\begin{definition}
	The density of $F\in\scG_{n}$ in $G\in\scG_{\infty}$ is defined by
	\[
	t(F;G):=\lim_{m\to\infty}\frac{1}{m^{n\downarrow}}\sum_{\varphi:[n]\to[m],
		injective}\indicator{G^{\varphi}=F}
	\]
	where $m^{n\downarrow}=m(m-1)\cdots(m-n+1)$.   
\end{definition}
The graph limit is an ordered set of graph densities.
\begin{definition}
	The graph limit of $G\in\scG_{\infty}$ is
	\[
	|G|:=(t(F;G))_{F\in\bar\scG}.
	\]
	We denote $|\bfG|:=(|\Gamma_{t}|)_{t\geq 0}$.
\end{definition}
The existence of graph limit is proved in \cite{Crane:2015we} and we will take it for granted.
\begin{proposition}
	Let $F\in\scG_n$ and $G,H\in\scG_\infty$ and assume that $J(F,G)$ exists. Then, 
	\begin{align*}
	\abs{t(F;G)-t(F;H)}\leq{n\choose 2} J(G,H)
	\end{align*}
\end{proposition}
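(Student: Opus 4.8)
The plan is to express the difference $t(F;G)-t(F;H)$ as an average, over injective maps $\varphi:[n]\to[m]$, of the quantity $\indicator{G^\varphi=F}-\indicator{H^\varphi=F}$, and to bound this pointwise by the indicator that $G^\varphi$ and $H^\varphi$ \emph{differ somewhere} on $[n]$. First I would write, for each $m\geq n$,
\[
t(F;G)-t(F;H)=\lim_{m\to\infty}\frac{1}{m^{n\downarrow}}\sum_{\varphi:[n]\to[m],\ injective}\brak{\indicator{G^\varphi=F}-\indicator{H^\varphi=F}},
\]
assuming both limits exist (the existence of $t(F;\cdot)$ is granted from \cite{Crane:2015we}, and $J(G,H)$ exists by hypothesis). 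The key pointwise observation is that if $G^{\varphi(i)\varphi(j)}=H^{\varphi(i)\varphi(j)}$ for all $i,j\in[n]$, then $G^\varphi=H^\varphi$, so $\indicator{G^\varphi=F}=\indicator{H^\varphi=F}$ and the summand vanishes. Hence the summand is nonzero only when $G^{\varphi(i)\varphi(j)}\neq H^{\varphi(i)\varphi(j)}$ for some pair $\{i,j\}$, and in that case $\abs{\indicator{G^\varphi=F}-\indicator{H^\varphi=F}}\leq 1$, giving
\[
\abs{t(F;G)-t(F;H)}\leq\limsup_{m\to\infty}\frac{1}{m^{n\downarrow}}\sum_{\varphi}\indicator{\exists\,i,j\in[n]:\ G^{\varphi(i)\varphi(j)}\neq H^{\varphi(i)\varphi(j)}}.
\]

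Next I would bound the indicator of the union by the sum over the $\binom{n}{2}$ unordered pairs $\{i,j\}\subseteq[n]$ of $\indicator{G^{\varphi(i)\varphi(j)}\neq H^{\varphi(i)\varphi(j)}}$. For a fixed pair $\{i,j\}$, summing over injective $\varphi$ and normalizing, the average of $\indicator{G^{\varphi(i)\varphi(j)}\neq H^{\varphi(i)\varphi(j)}}$ is, up to the lower-order discrepancy between $m^{n\downarrow}$ and $m^{2\downarrow}$ that disappears in the limit, exactly the fraction of ordered pairs $(a,b)$ of distinct vertices in $[m]$ with $G^{ab}\neq H^{ab}$; since $(G^{ab}-H^{ab})^2=\indicator{G^{ab}\neq H^{ab}}$ for $0/1$ matrices, this fraction is $J_m(G|_m,H|_m)$, which tends to $J(G,H)$. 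Summing the $\binom n2$ identical contributions yields the claimed bound $\binom{n}{2}J(G,H)$.

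The main obstacle is the bookkeeping in the combinatorial reduction: one must check that fixing the values $\varphi(i),\varphi(j)$ and letting the remaining $n-2$ coordinates of $\varphi$ range over injections into the remaining $m-2$ vertices contributes a factor $(m-2)^{(n-2)\downarrow}$, so that $(m-2)^{(n-2)\downarrow}/m^{n\downarrow}=1/(m(m-1))$ exactly, making the per-pair average equal to $J_m(G|_m,H|_m)$ on the nose rather than only asymptotically. Care is also needed that $J_m(G|_m,H|_m)\to J(G,H)$ is used only along the subsequence where the limit defining $J$ exists, which is fine since that limit is assumed to exist. Everything else is an application of the triangle inequality for indicators and interchanging a finite sum with the limit in $m$.
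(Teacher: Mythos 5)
Your proof is correct, and up to the pointwise bound $\abs{\indicator{G^\varphi=F}-\indicator{H^\varphi=F}}\leq\indicator{G^\varphi\neq H^\varphi}$ it coincides with the paper's argument. From that point on, however, the two routes genuinely diverge, and yours is the more robust one. The paper lower-bounds the density of injections with $G^\varphi=H^\varphi$ by $(1-p)^{\binom{n}{2}}$ and then applies Bernoulli's inequality $1-(1-p)^{\binom{n}{2}}\leq \binom{n}{2}p$. That intermediate lower bound is an independence heuristic (as if the $\binom{n}{2}$ edge agreements occurred independently, each with probability $1-p$) and it is false for general pairs $G,H$: if the difference graph $D^{ab}=\indicator{G^{ab}\neq H^{ab}}$ consists of two disjoint cliques on half the vertices each, then $p=J(G,H)\to 1/2$ while the density of independent $3$-sets of $D$ (i.e.\ of injections with $G^\varphi=H^\varphi$ for $n=3$) is $0$, not $\geq(1-p)^3=1/8$. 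Your argument replaces this with a union bound over the $\binom{n}{2}$ pairs $\{i,j\}\subseteq[n]$ together with the exact counting identity $(m-2)^{(n-2)\downarrow}/m^{n\downarrow}=1/(m(m-1))$, which shows each pair contributes exactly $J_m(G|_m,H|_m)\to J(G,H)$; this yields $\binom{n}{2}J(G,H)$ directly and rigorously. (The final bound in the paper is not endangered, since the union bound gives $1-\lim\frac{1}{m^{n\downarrow}}\sum_\varphi\indicator{G^\varphi=H^\varphi}\leq\binom{n}{2}p$ without passing through $(1-p)^{\binom{n}{2}}$, but your write-up is the one that actually proves it.) One cosmetic point: the hypothesis ``$J(F,G)$ exists'' should read ``$J(G,H)$ exists'', and you correctly use only that assumption.
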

\begin{proof}
	Let us denote $p:=J(G,H)$. Note that
	\begin{align*}
	|t(F;G)-t(F;H)|&=\abs{\lim_{m\to\infty}\frac{1}{m^{n\downarrow}}\sum_{\varphi:[n]\to[m],\text{\rm injective}}\brak{\indicator{G^\varphi=F}-\indicator{H^\varphi=F}}}\\
	&\leq\lim_{m\to\infty}\frac{1}{m^{n\downarrow}}\sum_{\varphi:[n]\to[m],\text{\rm injective}}\indicator{G^\varphi\neq H^\varphi}\\
	&\leq 1-\lim_{m\to\infty}\frac{1}{m^{n\downarrow}}\sum_{\varphi:[n]\to[m],\text{\rm injective}}\indicator{G^\varphi= H^\varphi}\\
	&\leq 1-(1-p)^{n\choose 2}\leq p{n\choose 2}
	\end{align*}
	Therefore, the claim is proved.
\end{proof}
\begin{theorem}\label{glimitvar}
	Let us define the metric $d$ on
	the space of graph limit as
	\[
	d(|G|,|H|):=\sum_{n\in\bbN}f(n)\sum_{F\in\scG_{n}}\abs{t(F;G)-t(F;H)}.
	\] 
	Then, if $f:\bbN\to\bbR$ satisfies $\sum_{n\in\bbN}f(n){n\choose 2}2^{n\choose 2}<\infty$, the total variation of $|\bfG|$ satisfying (i)--(iv) is finite: that is,
	\[
	\norm{|\bfG|}_{TV[0,1]}:=\limsup_{p\to0}\sum_{k=1}^{N_p}d(|\Gamma_{\tau^p_k}|,|\Gamma_{\tau^p_{k-1}}|)<\infty.
	\]
On the other hand, for any finite graph $F$, $t(F;\Gamma_t)$ has finite bounded variation. In other words, $|\bfG|$ has finite dimensional bounded variation on $[0,1]$.
\end{theorem}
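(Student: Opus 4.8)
The plan is to reduce everything to the Proposition just proved — $\abs{t(F;G)-t(F;H)}\le{n\choose2}J(G,H)$ for $F\in\scG_n$ — together with the elementary count $\abs{\scG_n}=2^{n\choose2}$ and the a.s. bound $\sup_{p\in(0,1)}pN_p<\infty$ from Corollary \ref{estN}. First I would show that $d$ is dominated by $J$: for $G,H\in\scG_\infty$ with $J(G,H)$ defined, summing the Proposition over all $F\in\scG_n$ and then over $n$ with weight $f(n)$ gives
\[
d(\abs{G},\abs{H})=\sum_{n\in\bbN}f(n)\sum_{F\in\scG_n}\abs{t(F;G)-t(F;H)}\le\sum_{n\in\bbN}f(n)\,2^{n\choose2}{n\choose2}\,J(G,H)=C'J(G,H),
\]
where $C':=\sum_{n\in\bbN}f(n){n\choose2}2^{n\choose2}$ is finite precisely because of the hypothesis on $f$.

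Next I would telescope along the localizing sequence $(\tau^p_k)$. By its construction, $J(\Gamma_{\tau^p_k},\Gamma_{\tau^p_{k-1}})\le p$ for every $k\le N_p$: for $k<N_p$ it is in fact equal to $p$, because condition (iii) together with the triangle inequality for $J$ (which holds on $\scG_\infty$ since each $J_n$ is a normalized Hamming distance on $0$--$1$ adjacency matrices and $J=\lim_nJ_n$) forces $t\mapsto J(\Gamma_t,\Gamma_{\tau^p_{k-1}})$ to be continuous, so that it reaches the level $p$ exactly at $\tau^p_k$; and for the terminal interval the increment is $\le p$ because $1<\tau^p_{N_p}$. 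Combining with the previous step,
\[
\sum_{k=1}^{N_p}d(\abs{\Gamma_{\tau^p_k}},\abs{\Gamma_{\tau^p_{k-1}}})\le C'\sum_{k=1}^{N_p}J(\Gamma_{\tau^p_k},\Gamma_{\tau^p_{k-1}})\le C'pN_p,
\]
and letting $p\to0$ yields $\norm{\abs{\bfG}}_{TV[0,1]}\le C'\sup_{p\in(0,1)}pN_p<\infty$ almost surely by Corollary \ref{estN}.

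For the finite-dimensional assertion I would run the same telescoping argument with a single finite graph $F\in\scG_n$ in place of the sum defining $d$: the Proposition gives $\sum_{k=1}^{N_p}\abs{t(F;\Gamma_{\tau^p_k})-t(F;\Gamma_{\tau^p_{k-1}})}\le{n\choose2}pN_p$, hence $t\mapsto t(F;\Gamma_t)$ has total variation at most ${n\choose2}\sup_{p\in(0,1)}pN_p<\infty$ on $[0,1]$, which is the claimed finite-dimensional bounded variation. Once the Proposition and Corollary \ref{estN} are in hand the argument is essentially bookkeeping; the two points that deserve care are matching the cardinality count $2^{n\choose2}$ to the summability hypothesis on $f$ so that $C'<\infty$, and checking that each localization step contributes a $J$-increment of at most $p$ — i.e.\ that $J$ does not jump along the path, which is exactly where condition (iii) is used. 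I do not expect any obstacle beyond these routine verifications.
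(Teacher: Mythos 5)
Your proposal is correct and follows essentially the same route as the paper: both rest on the Proposition $\abs{t(F;G)-t(F;H)}\leq{n\choose 2}J(G,H)$, the count $\abs{\scG_n}=2^{n\choose 2}$, the fact that each localization step contributes a $J$-increment of (at most) $p$, and Corollary \ref{estN}; the only cosmetic difference is that you sum over $F$ and $n$ first to get the Lipschitz bound $d\leq C'J$ before telescoping in $k$, whereas the paper telescopes first and interchanges the sums by Fubini.
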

\begin{proof}
	By Fubini theorem and dominated convergence theorem,
	\begin{align*}
	\limsup_{p\to0}\sum_{k=1}^{N_p}d(|\Gamma_{\tau^p_k}|,|\Gamma_{\tau^p_{k-1}}|)&=\limsup_{p\to0}\sum_{n\in\bbN}f(n)\sum_{F\in\scG_{n}}\sum_{k=1}^{N_p}\abs{t(F;\Gamma_{\tau^p_k})-t(F;\Gamma_{\tau^p_{k-1}})}\\
	&\leq \limsup_{p\to\infty}\sum_{n\in\bbN}f(n)\sum_{F\in\scG_{n}}{n\choose 2}pN_p\\
	&\leq \limsup_{p\to\infty}\sum_{n\in\bbN}f(n){n\choose 2}2^{n\choose 2}pN_p\leq C\sum_{n\in\bbN}f(n){n\choose 2}2^{n\choose 2}
	\end{align*}
	where $C=\limsup_{p\to0} pN_p<\infty$ is given by Corollary \ref{estN}.
If $f$ does not satisfy the integrable property, then we get finite dimensional locally bounded variation since
\begin{align*}
\lim_{p\to0}\sum_{k=1}^{N_p}\abs{t(F;\Gamma_{\tau^p_k})-t(F;\Gamma_{\tau^p_{k-1}})}\leq{n\choose 2}\limsup_{p\to0}pN_p<\infty
\end{align*}
\end{proof}
The following corollary is Theorem 3.3 of \cite{Crane:2015we}.
\begin{corollary}
	$|\bfG|$ has finite dimensional bounded variation on $[0,1]$ if we define the metric $d$ on
	the space of graph limit as
	\[
	d(|G|,|G'|):=\sum_{n\in\bbN}2^{-n}\sum_{F\in\scG_{n}}\abs{t(F;G)-t(F;G')}.
	\]
\end{corollary}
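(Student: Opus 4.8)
The corollary follows from Theorem \ref{glimitvar} essentially by verifying that the specific choice $f(n)=2^{-n}$ is admissible, or at least covers the finite-dimensional conclusion. Here is the plan.

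\textbf{Step 1: Reduce to Theorem \ref{glimitvar}.} The metric in the corollary is exactly the metric of Theorem \ref{glimitvar} with the weight $f(n)=2^{-n}$. So I would simply observe that the second assertion of Theorem \ref{glimitvar} — namely that for every finite graph $F$ the density $t(F;\Gamma_t)$ has finite bounded variation on $[0,1]$ — is proved there \emph{without any integrability hypothesis on $f$}. Indeed, the final display in the proof of Theorem \ref{glimitvar} gives
\[
\lim_{p\to0}\sum_{k=1}^{N_p}\abs{t(F;\Gamma_{\tau^p_k})-t(F;\Gamma_{\tau^p_{k-1}})}\leq\binom{n}{2}\limsup_{p\to0}pN_p<\infty
\]
for any $F\in\scG_n$, using only Corollary \ref{estN}. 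Summing over the finitely many graphs $F$ in $\scG_1,\dots,\scG_n$ for any fixed $n$, the finite-dimensional projection $(t(F;\Gamma_t))_{F\in\cup_{k\le n}\scG_k}$ has bounded variation, which is precisely the assertion ``$|\bfG|$ has finite dimensional bounded variation on $[0,1]$'' for this metric. That already yields the corollary.

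\textbf{Step 2 (optional strengthening): check the integrability condition.} If one wants the stronger conclusion that the \emph{full} total variation $\norm{|\bfG|}_{TV[0,1]}$ under this metric is finite, it suffices to verify the hypothesis $\sum_{n\in\bbN}f(n)\binom{n}{2}2^{\binom{n}{2}}<\infty$ of Theorem \ref{glimitvar} with $f(n)=2^{-n}$. But $2^{-n}\binom{n}{2}2^{\binom{n}{2}}=\binom{n}{2}2^{\binom{n}{2}-n}\to\infty$, so this series \emph{diverges}; hence Theorem \ref{glimitvar} only delivers the finite-dimensional conclusion for Crane's metric, not the infinite-dimensional one. This is exactly the gap that the paper's new metric (with rapidly decaying $f$) is designed to close, so I would state the corollary only as the finite-dimensional claim, matching Theorem 3.3 of \cite{Crane:2015we}, and remark that this is why a different weight is needed for full total variation.

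\textbf{Main obstacle.} There is essentially no obstacle: the corollary is a direct specialization of the already-proven finite-dimensional half of Theorem \ref{glimitvar}. The only subtlety worth flagging is not to overclaim — one must resist trying to apply the first (infinite-dimensional) half of Theorem \ref{glimitvar}, since the weight $2^{-n}$ fails the required summability $\sum_n 2^{-n}\binom n2 2^{\binom n2}<\infty$. So the proof is a one-line invocation: apply the second conclusion of Theorem \ref{glimitvar}, valid for arbitrary $f$, with $f(n)=2^{-n}$.
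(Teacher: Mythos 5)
Your proposal is correct and matches the paper's treatment: the paper gives no separate proof for this corollary, presenting it as an immediate specialization of the second (finite-dimensional) assertion of Theorem \ref{glimitvar}, which holds for arbitrary $f$ and in particular for $f(n)=2^{-n}$. Your observation that $2^{-n}\binom{n}{2}2^{\binom{n}{2}}$ is not summable, so only the finite-dimensional conclusion is available for this metric, is also exactly the point the paper makes by introducing the subsequent corollary with the faster-decaying weight $2^{-n^2}$.
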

If we define $f$ so that it decreases fast enough, then we have \textit{infinite dimensional} bounded variation.
\begin{corollary}
	$|\bfG|$ has bounded variation on $[0,1]$ if we define the metric $d$ on
	the space of graph limit as
	\[
	d(|G|,|G'|):=\sum_{n\in\bbN}2^{-n^{2}}\sum_{F\in\scG_{n}}\abs{t(F;G)-t(F;G')}.
	\]
\end{corollary}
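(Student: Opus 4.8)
The plan is to obtain this as the special case $f(n):=2^{-n^2}$ of Theorem \ref{glimitvar}. Since $2^{-n^2}>0$, this $f$ indeed maps $\bbN$ into $\bbR$, so the only thing I would need to verify is the summability hypothesis of that theorem, namely
\[
\sum_{n\in\bbN} f(n)\,\binom{n}{2}\,2^{\binom{n}{2}} < \infty;
\]
once this is in hand, the theorem delivers both $\norm{|\bfG|}_{TV[0,1]}<\infty$ and the finite-dimensional bounded variation of each $t(F;\Gamma_t)$, which is exactly the assertion of the corollary.

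So the core of the argument is an elementary exponent estimate. Writing $\binom{n}{2}=\tfrac{n(n-1)}{2}$, I would compute
\[
f(n)\,\binom{n}{2}\,2^{\binom{n}{2}} = \binom{n}{2}\, 2^{-n^2 + \frac{n(n-1)}{2}} = \binom{n}{2}\, 2^{-\frac{n(n+1)}{2}},
\]
and then bound the right-hand side: for every $n\ge 1$ one has $\binom{n}{2}\le n^2$ and $\tfrac{n(n+1)}{2}\ge n$, hence $f(n)\binom{n}{2}2^{\binom{n}{2}}\le n^2 2^{-n}$. Since $\sum_{n\ge 1} n^2 2^{-n}<\infty$, the required series converges (in fact the $n=1$ term already vanishes under the convention $\binom{1}{2}=0$ used in the preceding example), and Theorem \ref{glimitvar} applies.

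The main --- and essentially only --- obstacle is conceptual rather than technical: one must notice that the point of choosing $f(n)=2^{-n^2}$, rather than the slower-decaying $f(n)=2^{-n}$ of the previous corollary, is that the super-exponential factor $2^{-n^2}$ comfortably absorbs the $2^{\binom{n}{2}}$ that measures the number of graphs on $n$ vertices, so that the entire sum over $F\in\bar\scG$ defining $d$ --- not merely each fixed-$F$ coordinate --- has bounded variation along the localizing sequence $(\tau^p_k)$. Beyond that observation, the proof is a one-line comparison with a convergent geometric-type series, and the real work has already been done in Theorem \ref{glimitvar} and, underlying it, the bound $\limsup_{p\to0}pN_p<\infty$ from Corollary \ref{estN}.
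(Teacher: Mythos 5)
Your proposal is correct and is exactly the argument the paper intends: the corollary is the special case $f(n)=2^{-n^{2}}$ of Theorem \ref{glimitvar}, and your computation $f(n)\binom{n}{2}2^{\binom{n}{2}}=\binom{n}{2}2^{-n(n+1)/2}\le n^{2}2^{-n}$ verifies the summability hypothesis. Nothing further is needed.
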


\bibliography{Untitled}{}
\bibliographystyle{abbrv}
\end{document}